\documentclass[11pt]{article}
\usepackage{amsmath,amsthm}
\usepackage{amssymb,mathrsfs}
\usepackage{bm,mathtools}

\usepackage{xcolor}
\usepackage{geometry}
\usepackage[colorlinks=true,
linkcolor=blue,citecolor=blue,
urlcolor=blue,pagebackref]{hyperref}

\makeatletter
\def\@seccntDot{.}
\def\@seccntformat#1{\csname the#1\endcsname\@seccntDot\hskip 0.5em}
\renewcommand\section{\@startsection{section}{1}{\z@}%
{18\p@ \@plus 6\p@ \@minus 3\p@}%
{9\p@ \@plus 6\p@ \@minus 3\p@}%
{\large\bfseries\boldmath}}
\renewcommand\subsection{\@startsection{subsection}{2}{\z@}%
{12\p@ \@plus 6\p@ \@minus 3\p@}%
{3\p@ \@plus 6\p@ \@minus 3\p@}%
{\bfseries\boldmath}}
\renewcommand\subsubsection{\@startsection{subsubsection}{3}{\z@}%
{12\p@ \@plus 6\p@ \@minus 3\p@}%
{\p@}%
{\bfseries\boldmath}}
\makeatother

\usepackage{microtype}

\theoremstyle{plain}
\newtheorem{theorem}{Theorem}[section]
\newtheorem{lemma}{Lemma}[section]

\newtheorem{corollary}{Corollary}[section]
\newtheorem{proposition}{Proposition}[section]

\theoremstyle{definition}

\numberwithin{equation}{section}
\allowdisplaybreaks
\title{The largest Laplacian eigenvalue of induced-$K_{1,r}$-free graphs}

\author{Lele Liu\footnote{School of Mathematical Sciences, Anhui University, Hefei 230601, 
P.R. China. E-mail: \texttt{liu@ahu.edu.cn} (L. Liu). Supported by the National
Nature Science Foundation of China (No. 12471320) and the Anhui Provincial Natural Science Foundation (No. 2408085Y003).}
~~ and ~~
Bo Ning\footnote{Corresponding author. College of Computer Science, Nankai University, Tianjin 300350, P.R. China.
E-mail: \texttt{bo.ning@nankai.edu.cn} (B. Ning). Partially supported by the National Nature Science
Foundation of China (No. 12371350).}}

\date{}

\begin{document}
\maketitle

\begin{abstract}
Let $G$ be a simple graph of maximum degree $d$, and let
$\mu(G)$ denote the largest eigenvalue of its Laplacian matrix. 
For a fixed integer $k\geq 2$, Aharoni,
Alon, and Berger (2016) asked whether every graph containing no induced copy of
$K_{1,k}$ satisfies
$\mu(G)\leq (2 - \frac{2}{k} + o(1)) d$.
We answer this question by proving the stronger sharp bound
\[
\mu(G)\leq \left(2-\frac{2}{k}\right)(d+1).
\]
% More generally, if every open neighborhood of $G$ has independence number at
% most $r$, then $\mu(G)\leq 2r(d+1)/(r+1)$. Equality is attained by clique
% extensions of $r$-regular bipartite graphs. 
The proof combines a sign decomposition of a Laplacian Rayleigh vector with a 
weighted local Caro-Wei type inequality for independent sets.
\par\vspace{2mm}

\noindent{\bfseries Keywords:} Laplacian spectral radius; Independence number; Clique extension
\par\vspace{2mm}

\noindent{\bfseries AMS Classification:} 05C50, 05C69
\end{abstract}

\section{Introduction}

Throughout the paper, all graphs are finite and simple. For a graph $G$, let $A(G)$
and $D(G)$ denote its adjacency matrix and diagonal degree matrix, respectively.
The Laplacian matrix is $L(G)=D(G)-A(G)$, and its largest eigenvalue is denoted 
by $\mu(G)$. We write $N_G(v)$ for the open neighborhood of a vertex $v$ and 
$\Delta(G)$ for the maximum degree. The general estimate
$\mu(G)\leq 2\Delta(G)$
is classical; if $G$ is connected, equality holds precisely when $G$ is
regular and bipartite \cite{AndersonMorley}. Standard background on graph
spectra and Laplacian eigenvalues can be found in \cite{BrouwerHaemers}.

For an integer $k\geq 2$, a graph is  induced $K_{1,k}$-free if it contains no induced
copy of $K_{1,k}$. Equivalently,
$\alpha (G[N_G(v)])\leq k-1$ for every $v\in V(G)$.
Aharoni, Alon, and Berger \cite{AharoniAlonBerger} initiated the study of the
largest Laplacian eigenvalue under this local restriction. Let $t(d,k)$ be
the minimum number of edges in a graph on $d$ vertices with independence
number at most $k-1$. They proved
$\mu(G)\leq 2d-\frac{t(d,k)}{d-1}$
for an induced $K_{1,k}$-free graph of maximum degree $d>1$. The complement of
Tur\'{a}n's theorem \cite{Turan} gives $t(d,k) = (1 + o(1)) \frac{d^2}{2(k-1)}$,
and hence their result yields
\[
\mu(G)\leq \left(2-\frac{1}{2k-2}+o(1)\right) d.
\]
They also constructed induced $K_{1,k}$-free graphs with Laplacian spectral radius
asymptotic to $(2-2/k)d$ and asked whether this smaller coefficient is always
an upper bound up to a lower-order term. The regular case was subsequently
revisited, in terms of the least adjacency eigenvalue, by Cioab\u{a}, Elzinga,
and Gregory \cite{CioabaElzingaGregory}. It was also mentioned in the survey \cite{KoolenCaoYang2011}. 
To the best of our knowledge, this question has remained open since it was 
posed by Aharoni, Alon, and Berger in 2016.

We settle the question in the following form. It is convenient to state
the result using the local independence number.

\begin{theorem}\label{thm:main}
Let $k\geq 2$ be an integer, and let $G$ be a graph such that
$\alpha (G[N_G(v)])\leq k-1$ for every $v\in V(G)$. Then
\[
\mu(G)\leq \left(2-\frac{2}{k}\right) (\Delta(G) + 1).
\]
\end{theorem}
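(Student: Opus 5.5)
The plan is to bound the Rayleigh quotient $x^{\top}L(G)x/x^{\top}x$ for an arbitrary nonzero real vector $x$, writing $d=\Delta(G)$ and $r=k-1$. The target coefficient is $2-\frac{2}{k}=\frac{2r}{r+1}$. Split $V(G)=P\cup N$ according to the sign of $x$, with $P=\{v: x_v\geq 0\}$, and put $y=|x|$. Then
\[
x^{\top}Lx=\sum_{uv\in E}(x_u-x_v)^2=\sum_{v}d_v y_v^2-2\sum_{\substack{uv\in E\\ \text{same sign}}}y_uy_v+2\sum_{\substack{uv\in E\\ \text{opposite sign}}}y_uy_v .
\]
The problem therefore becomes showing that the cross-sign edges cannot carry too much weight relative to $\sum_v y_v^2$ and the same-sign edges. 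This is where the local independence condition has to enter: a vertex cannot have a large, pairwise nonadjacent set of neighbours of one sign.

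The local tool will be a weighted Caro--Wei (Motzkin--Straus) inequality. If $H$ is a graph with $\alpha(H)\leq r$ and $z\geq 0$ is a weight vector on $V(H)$, then applying Motzkin--Straus to $\overline{H}$, whose clique number equals $\alpha(H)$, gives
\[
z^{\top}(I+A(H))z=\sum_{u}z_u^2+2\sum_{uw\in E(H)}z_uz_w\;\geq\;\frac{1}{r}\Bigl(\sum_u z_u\Bigr)^2 .
\]
I will apply this for each vertex $v$ with $H=G[N(v)\cap P]$ and with $H=G[N(v)\cap N]$. Each is an induced subgraph of $G[N_G(v)]$, so each has independence number at most $r$. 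This yields lower bounds on the same-sign edge weight inside $N(v)$ in terms of $\bigl(\sum_{u\in N(v)\cap P}y_u\bigr)^2$ and $\bigl(\sum_{u\in N(v)\cap N}y_u\bigr)^2$. On the other hand, the cross-sign weight at $v$ is $y_v\sum_{u\in N(v),\,\text{opposite}}y_u$, which I will bound by AM--GM as $\varepsilon y_v^2+\frac{1}{4\varepsilon}\bigl(\sum_{\text{opposite}}y_u\bigr)^2$.

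Next, sum over all $v$. A same-sign edge $uw$ lies inside $N(v)$ for at most the common neighbours $v$ of $u$ and $w$; together with $u$ and $w$ themselves, that is at most $d+1$ choices of closed neighbourhoods. Likewise each $y_u^2$ is counted $d_u\leq d$ times, or $d+1$ times if we use $N[v]$. Double counting should then give an inequality of the form
\[
\sum_{\text{cross}}y_uy_v-\sum_{\text{same}}y_uy_v\;\leq\;\Bigl(\tfrac{r}{r+1}(d+1)-\tfrac12\bar d\Bigr)\sum_v y_v^2+\text{(correction terms)},
\]
with $\varepsilon$ tuned to make the coefficients match $\frac{2r}{r+1}(d+1)$. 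The "$+1$" in $d+1$ comes precisely from passing from open to closed neighbourhoods. I would actually run the argument with $N[v]$, placing $v$ on its own sign side; this is legitimate because $\alpha(G[N[v]])\leq r$ as well whenever $r\geq1$. In the optimisation the Cauchy--Schwarz step is tight exactly when weights are spread evenly over $r$ cliques, which suggests that clique extensions of $r$-regular bipartite graphs are the extremal examples.

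The main obstacle is the bookkeeping in this double count. A same-sign edge is "rewarded" once for each closed neighbourhood containing it, but the reward may be needed at different vertices than where the cross-sign weight sits. The coefficient of $\sum y_v^2$ must come out exactly $\frac{2r}{r+1}(d+1)$, with no loss for vertices of degree below $d$ or for same-sign edges having few common neighbours. My first attempt will be to avoid AM--GM per vertex. Instead I will consider the single global quadratic form
\[
\sum_v\Bigl(\sum_{u\in N[v]\cap P}y_u-\sum_{u\in N[v]\cap N}y_u\Bigr)^2=\bigl\|(I+A)x\bigr\|^2\;\geq 0,
\]
and combine it with the summed Motzkin--Straus inequalities, weighted by $\frac{r}{r+1}$ and $\frac{1}{r+1}$. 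The aim is that the $A^2$-type terms, i.e.\ the pairs at distance two, cancel between the two sides, leaving exactly $\frac{2r}{r+1}(D+I)-L\succeq 0$ on $x$. If the exact cancellation fails, I will fall back to the per-vertex AM--GM with $\varepsilon$ depending on $v$ and absorb the slack using $d_v\leq d$.
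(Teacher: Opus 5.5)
\paragraph{Gap in the proposal.} Your starting point is exactly the paper's: split by sign, put $y=|x|$, and write $x^{\top}Lx=\sum_u d_uy_u^2+y^{\top}A(F)y-y^{\top}A(H)y$, with $F$ the cross-sign and $H$ the same-sign edges. But the proposal stops where the real work begins. You never obtain an inequality controlling $y^{\top}A(F)y$ by the same-sign weight with the right constants, and neither route you sketch can supply one.

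\emph{The global route} is ruled out by the equality case. For $K_{r,r}[K_t]$ with $x=\pm1$ on the two sides, which attains the bound, one has $(I+A)x=t(1-r)x\neq0$ for $r\geq2$. So $\|(I+A)x\|^2\geq0$ is strict at an extremal vector, and it cannot enter any certificate of the sharp inequality with positive weight.

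\emph{The per-vertex route} loses too much, for two reasons. First, Motzkin--Straus only controls $(A(F)y)_v^2$, so summing brings in $y^{\top}A(F)^2y$. There a same-sign edge $uw$ is weighted by the number of common opposite-sign neighbours of $u$ and $w$, instead of once as in $x^{\top}Lx$. Second, the constant $1/r$ costs a factor $\sqrt r$ once square roots are taken. Concretely, even with the optimal $\varepsilon_v$, the best this route gives is
\[
x^{\top}Lx\leq\sum_u d_uy_u^2+\sum_v y_v\sqrt{rQ_v}-y^{\top}A(H)y,\qquad Q_v=\sum_{u\in N_F(v)}y_u^2+2\sum_{uw\in E(H),\ u,w\in N_F(v)}y_uy_w .
\]
Take $G=K_{2m}$, which satisfies the hypothesis for every $r\geq1$, and $x=\pm1$ on two halves. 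The right-hand side is $2m^2(1+\sqrt r)$. The target $\frac{2r}{r+1}(d+1)\|x\|^2$ equals $8rm^2/(r+1)$, which is smaller as soon as $r\geq6$. Replacing $r$ by the actual independence number of $G[N_F(v)]$ does not help: attach to each vertex $r-1$ pendant vertices of the opposite sign and negligible weight, and the same failure persists for large $m$. So this route cannot prove the theorem for $k\geq7$.

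\paragraph{The missing idea.} You need to bound the cross-sign form \emph{linearly}, without squaring:
\[
y^{\top}A(F)y\leq r\,(y^{\top}y+y^{\top}A(H)y).
\]
The paper derives this from the weighted Caro--Wei inequality $\sum_{u\in S}y_u/(y_u+\sum_{w\in S\cap N_H(u)}y_w)\leq\alpha(H[S])$. This inequality implies Motzkin--Straus by Cauchy--Schwarz but is sharper. It is applied to $S=N_F(v)$, where $H[S]=G[S]$ because $S$ is monochromatic.

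On the support of $x$, set $Z=\mathrm{diag}(z_u)$ with $z_u=1+(A(H)y)_u/y_u$. The Caro--Wei bound then says every row sum of the nonnegative matrix $A(F)Z^{-1}$ is at most $r$. Hence $Z^{-1/2}A(F)Z^{-1/2}\preceq rI$, and testing this on $Z^{1/2}y$ gives the linear bound, with every same-sign edge counted exactly once.

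Write $\beta=y^{\top}A(F)y/y^{\top}y$ and $\eta=y^{\top}A(H)y/y^{\top}y$. The linear bound reads $\beta-r\eta\leq r$. Combine it with the trivial $\beta+\eta\leq d$ through the identity $\beta-\eta=\frac{2}{r+1}(\beta-r\eta)+\frac{r-1}{r+1}(\beta+\eta)$. This gives $x^{\top}Lx\leq(d+\beta-\eta)\,y^{\top}y\leq\frac{2r}{r+1}(d+1)\,x^{\top}x$.
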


For regular graphs, Theorem~\ref{thm:main} is equivalently a lower bound for
the least adjacency eigenvalue.

\begin{corollary}\label{cor:least}
Let $k\geq 2$. If $G$ is a $d$-regular $K_{1,k}$-free graph and $\lambda_{\min}(G)$ is the
least eigenvalue of $A(G)$, then
\[
\lambda_{\min}(G)\geq -\frac{k-2}{k} d - \frac{2(k-1)}{k}.
\]
\end{corollary}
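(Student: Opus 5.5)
This is a direct specialization of Theorem~\ref{thm:main}; no new combinatorial input is needed. The plan is to translate the Laplacian bound into an adjacency bound using regularity.

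First, I will check the hypothesis. In this setting, $K_{1,k}$-free means induced-$K_{1,k}$-free, as in the Introduction. A graph contains no induced $K_{1,k}$ exactly when no vertex $v$ has $k$ pairwise nonadjacent neighbours, that is, when $\alpha(G[N_G(v)])\leq k-1$ for every $v\in V(G)$. So $G$ satisfies the hypothesis of Theorem~\ref{thm:main}. Since $G$ is $d$-regular, $\Delta(G)=d$, and the theorem gives
\[
\mu(G)\leq \left(2-\frac{2}{k}\right)(d+1).
\]

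Second, I will use regularity to relate the spectra. Here $D(G)=dI$, so $L(G)=dI-A(G)$. The eigenvalues of $L(G)$ are therefore exactly $d-\lambda$ as $\lambda$ ranges over the eigenvalues of $A(G)$. The largest Laplacian eigenvalue corresponds to the smallest adjacency eigenvalue, so $\mu(G)=d-\lambda_{\min}(G)$.

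Finally, combining the two gives
\[
\lambda_{\min}(G)\geq d-\left(2-\frac{2}{k}\right)(d+1)=-d+\frac{2d}{k}-2+\frac{2}{k}=-\frac{k-2}{k}\,d-\frac{2(k-1)}{k},
\]
which is the claimed bound. The only point needing care is the identification $\mu(G)=d-\lambda_{\min}(G)$, which relies on $D(G)$ being a scalar matrix; the rest is arithmetic.
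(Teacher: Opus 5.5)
Your proposal is correct and matches the paper's argument, which is left implicit in the remark that for regular graphs Theorem~\ref{thm:main} is equivalent to a least-eigenvalue bound. The argument is that $L(G)=dI-A(G)$ gives $\mu(G)=d-\lambda_{\min}(G)$, and you then substitute the theorem with $\Delta(G)=d$. Your translation of $K_{1,k}$-freeness into $\alpha(G[N_G(v)])\leq k-1$ and your final arithmetic are both right.
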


The proof of Theorem~\ref{thm:main} is given in Section~\ref{sec:main}.  
Its main ingredient is a weighted form of the local-minimum proof of the
Caro--Wei bound \cite{Caro,Wei}, and a transformation of the local 
independence condition into a quadratic form. The full details appear 
in Section~\ref{sec:main}. We then discuss applications and sharpness in Sections~\ref{sec:applications} and~\ref{sec:sharpness}, respectively

\section{Proof of our main results}
\label{sec:main}

We first record the weighted independent-set inequality used in the proof, which is a corollary of Theorem 1.4 in \cite{Bansal-Harris2023}. We include the proof here for the sake of completeness.

\begin{lemma}\label{lem:weighted-CW}
Let $G$ be a graph, and assign a positive real number $x_v$ to every $v\in V(G)$. Then
\[
\sum_{v\in V(G)} \frac{x_v}{x_v + \sum_{u\in N_G(v)} x_u}
\leq \alpha(G).
\]
\end{lemma}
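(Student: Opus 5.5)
We prove the lemma with the weighted version of the local-minimum (greedy) argument behind the Caro--Wei bound. For each vertex $v$, write $S_v = x_v + \sum_{u\in N_G(v)} x_u$ for the total weight of the closed neighborhood $N_G[v]$. The left-hand side is $\sum_v x_v/S_v$. We build an independent set $I$ with
\[
\sum_{v} \frac{x_v}{S_v} \le |I|.
\]

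The cleanest route is a random ordering. We assign to each vertex $v$ an independent exponential random variable $T_v$ with rate $x_v$. This is where positivity of the $x_v$ is used. Let $I$ be the set of vertices $v$ whose $T_v$ is strictly smallest among all $T_u$ with $u\in N_G[v]$. Ties occur with probability zero.

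First, $I$ is independent. If two adjacent vertices $v,w$ were both in $I$, then $T_v<T_w$ and $T_w<T_v$, which is impossible.

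Second, we compute $\Pr[v\in I]$. By the memoryless "exponential race" property, the minimum of independent exponentials over $N_G[v]$ is attained at $v$ with probability $x_v/S_v$. Linearity of expectation then gives
\[
\alpha(G)\ \ge\ \mathbb{E}|I| \;=\; \sum_{v} \frac{x_v}{S_v}.
\]
Since some outcome achieves at least the expectation, this proves the inequality.

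If we want a deterministic proof, in the spirit of the ``local-minimum'' phrasing the paper advertises, we can instead induct on $|V(G)|$. Choose a vertex $v$ minimizing $S_v$ (or, more carefully, one that makes the charging work), put $v$ into $I$, and delete $N_G[v]$. We then need the vertices removed in this step to contribute at most $1$ to the sum, i.e.
\[
\sum_{u\in N_G[v]} x_u/S_u\le 1.
\]
This does not follow from $S_u\ge S_v$ alone, since $S_u$ is measured in $G$ while later sums are taken in the smaller graph. Because deleting vertices only decreases each $S_u$, the induction hypothesis applied to $G-N_G[v]$ gives an upper bound for the new, larger terms. The charging inequality itself holds if $v$ minimizes $S_v$, since then
\[
\sum_{u\in N_G[v]} x_u/S_u \le \sum_{u\in N_G[v]} x_u/S_v = 1.
\]
Combined with monotonicity of the remaining terms, this closes the induction.

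The main obstacle is exactly this bookkeeping: in the deterministic version, we must check that the terms for surviving vertices only increase after deletion, so the induction hypothesis dominates them. In the probabilistic version, the only delicate point is justifying the exponential-race identity $\Pr[T_v=\min_{u\in N_G[v]}T_u]=x_v/S_v$. That identity is a standard one-line integral,
\[
\int_0^\infty x_v e^{-x_v t}\prod_{u\in N_G(v)} e^{-x_u t}\,dt = \frac{x_v}{S_v},
\]
so I would present the probabilistic proof.
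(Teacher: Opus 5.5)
Your probabilistic argument is correct and is essentially the paper's proof: independent exponential clocks of rate $x_v$, the set $I$ of vertices whose clock beats all their neighbors' is independent, and the integral gives $\Pr(v\in I)=x_v/S_v$, so $\mathrm{E}|I|$ equals the left-hand side and is at most $\alpha(G)$. Your deterministic alternative (pick $v$ minimizing $S_v$ and delete $N_G[v]$) is also valid, despite the confusing remark that the charging ``does not follow from $S_u\ge S_v$ alone'': the deleted terms total at most $1$, the surviving terms only grow after deletion, and $\alpha(G)\ge 1+\alpha(G-N_G[v])$.
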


\begin{proof}
For every $v\in V(G)$, let $X_v$ be an exponential random variable of rate
$x_v$, and take these random variables independently. Let
\[
I = \{v\in V(G): X_v < X_u\text{ for every }u\in N_G(v)\}.
\]
By construction, $I$ is an independent set. Moreover,
\[
\mathrm{Pr}(v\in I)
= \int_0^\infty x_v \mathrm{e}^{-x_v t}
\prod_{u\in N_G(v)} \mathrm{e}^{-x_u t}\,\mathrm{d}t 
= \frac{x_v}{x_v + \sum_{u\in N_G(v)} x_u}.
\]
It follows that
\[
\sum_{v\in V(G)}
\frac{x_v}{x_v + \sum_{u\in N_G(v)} x_u}
=\mathrm{E}|I|
\leq \alpha(G),
\]
as required.
\end{proof}

The next lemma converts the local independence condition into a quadratic-form inequality. 

\begin{lemma}\label{lem:matrix}
Let $G$ and $H$ be graphs on a common vertex set $U$, and suppose that
$\alpha (H[N_G(v)])\leq r$ for every $v\in U$. Then every vector 
$\bm{y}\in\mathbb{R}^U$ with positive coordinates satisfies
\[
\bm{y}^{\mathrm{T}} A(G) \bm{y}
\leq r\big(\bm{y}^{\mathrm{T}} \bm{y} + \bm{y}^{\mathrm{T}} A(H) \bm{y}\big).
\]
\end{lemma}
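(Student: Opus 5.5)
The plan is to expand both sides edge by edge, apply Lemma~\ref{lem:weighted-CW} locally inside each neighbourhood $N_G(v)$, and then reassemble the local bounds with a weighted AM--GM inequality.

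\emph{Rewriting the right-hand side.} For $u\in U$ set
\[
T_u = y_u + \sum_{w\in N_H(u)} y_w ,
\]
which is positive. Since $\bm{y}^{\mathrm{T}}A(H)\bm{y}=\sum_u y_u\sum_{w\in N_H(u)}y_w$, the right-hand side of the claim equals $r\sum_{u\in U} y_u T_u$.

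\emph{Local step.} Fix $v\in U$ and apply Lemma~\ref{lem:weighted-CW} to the graph $H[N_G(v)]$ with weights $y_u$. This gives
\[
\sum_{u\in N_G(v)} \frac{y_u}{y_u+\sum_{w\in N_H(u)\cap N_G(v)} y_w} \le \alpha\big(H[N_G(v)]\big)\le r .
\]
Each denominator is at most $T_u$, because the $y_w$ are positive and we only add the neighbours of $u$ outside $N_G(v)$. Hence, for every $v$,
\[
\sum_{u\in N_G(v)} \frac{y_u}{T_u}\le r .
\]

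\emph{Global step.} For each edge $uv$ of $G$, the two quantities $y_vT_v\cdot\frac{y_u}{T_u}$ and $y_uT_u\cdot\frac{y_v}{T_v}$ have product $(y_uy_v)^2$. So AM--GM gives
\[
2y_uy_v\le y_vT_v\frac{y_u}{T_u}+y_uT_u\frac{y_v}{T_v}.
\]
Summing over all edges of $G$, each ordered pair $(v,u)$ with $u\in N_G(v)$ contributes exactly once, so
\[
\bm{y}^{\mathrm{T}}A(G)\bm{y}=\sum_{uv\in E(G)}2y_uy_v\le\sum_{v\in U} y_vT_v\sum_{u\in N_G(v)}\frac{y_u}{T_u}.
\]
By the local bound the inner sum is at most $r$. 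Therefore the whole expression is at most $r\sum_v y_vT_v=r\big(\bm{y}^{\mathrm{T}}\bm{y}+\bm{y}^{\mathrm{T}}A(H)\bm{y}\big)$.

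The main obstacle is choosing the AM--GM split correctly. A naive Cauchy--Schwarz bound on $\big(\sum_{u\in N_G(v)}y_u\big)^2$ produces terms weighted by degrees, which do not match the right-hand side. The balanced split above instead pairs $y_u/T_u$, which the local Caro--Wei bound controls, with $y_vT_v$, which sums exactly to the target quadratic form. The only other point to check is that enlarging the denominators from the $H[N_G(v)]$-neighbourhood to the full $H$-neighbourhood preserves the inequality, and this uses the positivity of $\bm{y}$.
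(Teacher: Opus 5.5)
Your proof is correct, and its local step is the same as the paper's. Both apply Lemma~\ref{lem:weighted-CW} to $H[N_G(v)]$ with weights $y_u$, then enlarge each denominator to $T_u=y_u+(A(H)\bm{y})_u$, giving $\sum_{u\in N_G(v)}y_u/T_u\le r$.

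The two routes differ only in the global step. The paper sets $z_u=T_u/y_u$ and reads the local bound as saying that every row sum of $A(G)Z^{-1}$ is at most $r$. It concludes that the similar symmetric matrix $Z^{-1/2}A(G)Z^{-1/2}$ has largest eigenvalue at most $r$, and evaluates its Rayleigh quotient at $Z^{1/2}\bm{y}$. Your edgewise AM--GM does the same job by hand. Since $T_u=y_uz_u$, your inequality reads $2y_uy_v\le y_u^2\frac{z_u}{z_v}+y_v^2\frac{z_v}{z_u}$. This is precisely the Schur-test argument behind that spectral bound, specialized to the single vector $\bm{y}$.

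What each route buys:
\begin{itemize}
\item The paper's matrix formulation gives, at no extra cost, $\bm{x}^{\mathrm{T}}A(G)\bm{x}\le r\,\bm{x}^{\mathrm{T}}Z\bm{x}$ for every real vector $\bm{x}$, with $Z$ built from $\bm{y}$, stated in familiar spectral language.
\item Your version dispenses with the row-sum bound on the spectral radius, the similarity transformation, and the Rayleigh principle. It is therefore shorter and fully self-contained, and it is all the main theorem needs.
\end{itemize}
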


\begin{proof}
For $u\in U$, set $z_u:= 1 + \frac{(H\bm{y})_u}{y_u}$,
and let $Z$ be the diagonal matrix with diagonal entries $z_u$.  Fix
$v\in U$. Applying Lemma~\ref{lem:weighted-CW} to $H[N_G(v)]$, with vertex
weights $y_u$, gives
\begin{align}
\sum_{u\in N_G(v)} \frac{1}{z_u}
& = \sum_{u\in N_G(v)} \frac{y_u}{y_u+(A(H) \bm{y})_u} \nonumber \\
& \leq \sum_{u\in N_G(v)} \frac{y_u}{y_u + \sum_{w\in N_G(v)\cap N_H(u)} y_w} \nonumber \\
& \leq r. \label{eq:rows}
\end{align}
Thus every row sum of the nonnegative matrix $A(G) Z^{-1}$ is at most $r$.
Consequently, every eigenvalue $\theta$ of $A(G)Z^{-1}$ satisfies
$|\theta|\leq r$.  

The matrix $A(G)Z^{-1}$ is similar to the symmetric matrix
$M:= Z^{-1/2}A(G)Z^{-1/2}$, because $M=Z^{-1/2}(A(G)Z^{-1})Z^{1/2}$.  
Hence the largest eigenvalue of $M$ is at most $r$. Applying the Rayleigh 
inequality for $M$ to $Z^{1/2} \bm{y}$ yields
\begin{align*}
\bm{y}^{\mathrm{T}} A(G)\bm{y}
& = (Z^{1/2} \bm{y})^{\mathrm{T}} M(Z^{1/2}\bm{y})\leq r\,\bm{y}^{\mathrm{T}} Z \bm{y} \\
& = r\bigg(\bm{y}^{\mathrm{T}} \bm{y} + \sum_{u\in U} y_u (H\bm{y})_u\bigg) \\
& = r\big(\bm{y}^{\mathrm{T}} \bm{y} + \bm{y}^{\mathrm{T}} H\bm{y}\big),
\end{align*}
as desired.
\end{proof}

\begin{proof}[Proof of Theorem~\ref{thm:main}]
Let $\bm{x}\in\mathbb{R}^{V(G)}$ be nonzero, and put
\[
U:= \{v\in V(G): x_v\neq 0\}, \quad y_v:= |x_v| \quad (v\in U).
\]
Define two spanning subgraphs $F$ and $H$ of $G[U]$ by
\[
\begin{aligned}
    E(F) & =\{uv\in E(G[U]):x_ux_v<0\}, \\
    E(H) & =\{uv\in E(G[U]):x_ux_v>0\}.
\end{aligned}
\]
Thus $F$ consists of the edges whose endpoints have opposite signs, whereas
$H$ consists of the edges whose endpoints have the same sign. For every
$v\in U$, all vertices in $N_F(v)$ have the same sign. It follows that $H[N_F(v)] = G[N_F(v)]$.

Set for short
\[
\beta:= \frac{\bm{y}^{\mathrm{T}} A(F) \bm{y}}{\bm{y}^{\mathrm{T}} \bm{y}},
\qquad
\eta:= \frac{\bm{y}^{\mathrm{T}} A(H) \bm{y}}{\bm{y}^{\mathrm{T}} \bm{y}}, \qquad d:= \Delta(G).
\]
Every adjacency term with an endpoint outside $U$ is zero. Consequently,
\begin{align}
\bm{x}^{\mathrm{T}} L(G) \bm{x}
& = \sum_{v\in U} d_G(v) y_v^2 + \bm{y}^{\mathrm{T}} A(F) \bm{y} - \bm{y}^{\mathrm{T}} A(H) \bm{y} \nonumber \\
& \leq (d + \beta - \eta) \bm{y}^{\mathrm{T}} \bm{y}. \label{eq:rayleigh-first}
\end{align}
The inequality $2y_uy_v\leq y_u^2+y_v^2$, summed over the edges of $G[U]$, gives
\[
\bm{y}^{\mathrm{T}} (A(F) + A(H)) \bm{y}
= 2\sum_{uv\in E(G[U])} y_uy_v\leq \sum_{v\in U} d_{G[U]}(v) y_v^2\leq d\,\bm{y}^{\mathrm{T}} \bm{y}.
\]
Hence $\beta + \eta\leq d$. By Lemma \ref{lem:matrix},
$\beta\leq (k-1)(1 + \eta)$, or equivalently, $\beta - (k-1)\eta\leq k-1$. 
Since $k\geq 2$, the coefficients in the identity
\[
\beta - \eta = \frac{2}{k}(\beta - (k-1)\eta) + \frac{k-2}{k}(\beta + \eta)
\]
are nonnegative. Combining $\beta + \eta\leq d$ and
$\beta - (k-1)\eta\leq k-1$, we obtain
\[
\beta - \eta \leq \frac{2(k-1) + (k-2)d}{k}.
\]
Substitution into \eqref{eq:rayleigh-first} yields
\[
\frac{\bm{x}^{\mathrm{T}} L(G) \bm{x}}{\bm{x}^{\mathrm{T}} \bm{x}}
\leq d + \frac{2(k-1) + (k-2)d}{k} = \frac{2(k-1)}{k}(d+1).
\]
Taking the maximum over all nonzero vectors $\bm{x}$ proves the theorem.
\end{proof}

\section{Applications to independence complexes and independent transversals}
\label{sec:applications}

The original motivation for bounding the largest Laplacian eigenvalue of an induced
$K_{1,k}$-free graph came in part from the topology of its independence
complex and from independent-transversal problems. We record the consequences
of Theorem~\ref{thm:main} in these settings.

For a graph $G$, let $\mathcal{I}(G)$ denote its independence complex. For a
simplicial complex $\mathcal{C}$, let $\eta_{\mathrm{H}}(\mathcal{C})$ denote its
homological connectivity; thus $\eta_{\mathrm{H}}(\mathcal{ C})$ is the largest
integer $q$ such that
$\widetilde{H}_j(\mathcal{C}; \mathbb{R}) = 0$ for every $j\leq q-2$, where $\widetilde{H}_j(\mathcal{C}; \mathbb{R})$ is the $j$-th reduced homology group of the simplicial complex $\mathcal{C}$.
As usual, $\eta_{\mathrm H}(\mathcal C)=\infty$ if all the reduced homology
groups of $\mathcal C$ vanish.  A theorem of Aharoni, Berger, and
Meshulam~\cite{AharoniBergerMeshulam} gives the spectral bound
\begin{equation}\label{eq:homological-spectral-bound}
\eta_{\mathrm{H}}(\mathcal{I}(G))
\geq \frac{|V(G)|}{\mu(G)}
\end{equation}
whenever $G$ has at least one edge. If $G$ is edgeless, then
$\mathcal I(G)$ is a simplex and its homological connectivity is infinite.

\begin{corollary}\label{cor:homological-connectivity}
Let $k\geq 2$ be an integer, and let $G$ be a graph on $n$ vertices such 
that $G$ is induced $K_{1,k}$-free. If $d=\Delta(G)$, then
\[
\eta_{\mathrm{H}}(\mathcal{I}(G))
\geq \left\lceil\frac{kn}{2(k-1)(d+1)}\right\rceil.
\]
\end{corollary}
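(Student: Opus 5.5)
The plan is to combine the Aharoni--Berger--Meshulam bound \eqref{eq:homological-spectral-bound} with Theorem~\ref{thm:main}, and then use integrality of $\eta_{\mathrm H}$ to insert the ceiling.

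First, I dispose of the degenerate case. If $G$ has no edges, then $\mathcal I(G)$ is a simplex, so $\eta_{\mathrm H}(\mathcal I(G))=\infty$ and the inequality holds trivially. (Here $d=0$ and the right-hand side is finite.)

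So assume $G$ has at least one edge, so that $d\geq 1$. Since $G$ is induced $K_{1,k}$-free, $\alpha(G[N_G(v)])\leq k-1$ for every $v$. Theorem~\ref{thm:main} then gives
\[
0<\mu(G)\leq \frac{2(k-1)}{k}(d+1).
\]
Positivity of $\mu(G)$ holds because $\mu(G)\geq d+1\geq 2$ for a graph with an edge; in any case $\mu(G)>0$ as soon as $L(G)\neq 0$. Inserting this into \eqref{eq:homological-spectral-bound} yields
\[
\eta_{\mathrm H}(\mathcal I(G))\geq \frac{n}{\mu(G)}\geq \frac{kn}{2(k-1)(d+1)}.
\]

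Finally, $\eta_{\mathrm H}(\mathcal I(G))$ is either an integer or $+\infty$. An integer that is at least a real number $t$ is at least $\lceil t\rceil$, which gives the stated ceiling bound.

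There is no real obstacle. The only points needing care are the edgeless case, where \eqref{eq:homological-spectral-bound} is not available, and checking that the hypothesis of Theorem~\ref{thm:main} is exactly the induced $K_{1,k}$-freeness assumed here, so that the theorem applies with this $k$.
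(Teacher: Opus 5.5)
Your proposal is correct and matches the paper's proof: the edgeless case is handled because $\mathcal I(G)$ is a simplex, and otherwise you combine \eqref{eq:homological-spectral-bound} with Theorem~\ref{thm:main}. You also spell out two details the paper leaves implicit: why $\mu(G)>0$, and that the ceiling follows because $\eta_{\mathrm H}$ takes integer values or $\infty$.
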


\begin{proof}
If $G$ is edgeless, then $\mathcal{I}(G)$ is a simplex and the result is
immediate. Otherwise, combining \eqref{eq:homological-spectral-bound} with
Theorem~\ref{thm:main} gives the desired result. 
\end{proof}

We next turn to independent transversals. Let
$\mathcal P=\{V_1,\ldots,V_m\}$ be a partition of $V(G)$. An independent
transversal of $\mathcal P$ is an independent set containing exactly one
vertex from each part.  We use the homological form of the topological Hall
criterion of Meshulam~\cite{MeshulamClique}; see also
\cite[Section~3]{AharoniAlonBerger}. It states that if
\begin{equation}\label{eq:topological-hall}
\eta_{\mathrm{H}}\left(\mathcal{I}\left(G\left[\cup_{i\in S} V_i\right]\right)\right) \geq |S|
\end{equation}
for every $S\subseteq [m]$, then $\mathcal{P}$ has an independent transversal.

\begin{corollary}\label{cor:independent-transversal}
Let $r\ge1$ and $d\ge0$ be integers, and let $G$ be a graph of maximum degree
at most $d$ such that $\alpha\big(G[N_G(v)]\big)\leq r$ for every $v\in V(G)$.
Suppose that $V(G)$ is partitioned into sets $V_1,\ldots,V_m$ satisfying 
$|V_i|\geq\big\lceil\frac{2r}{r+1}(d+1)\big\rceil$ $(1\leq i\leq m)$. 
Then the partition has an independent transversal.
Consequently, if $G$ is $K_{1,k}$-free, it is sufficient that
$|V_i|\geq\left\lceil\left(2-\frac{2}{k}\right)(d+1)\right\rceil$ $(1\leq i\leq m)$.
If $d=0$, the conclusion holds under the weaker assumption $|V_i|\ge1$ for
every $i$.
\end{corollary}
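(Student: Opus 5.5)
The plan is to verify the topological Hall criterion \eqref{eq:topological-hall} for every nonempty $S\subseteq[m]$; the empty set is trivial. Fix such an $S$ and let $G_S:=G[\cup_{i\in S}V_i]$, with $n_S:=|V(G_S)|\ge |S|\,\lceil\frac{2r}{r+1}(d+1)\rceil$. The hypotheses pass to $G_S$. Indeed, $\Delta(G_S)\le d$, and for $v\in V(G_S)$ we have $N_{G_S}(v)\subseteq N_G(v)$, so $\alpha(G_S[N_{G_S}(v)])\le\alpha(G[N_G(v)])\le r$. If $G_S$ is edgeless, $\mathcal I(G_S)$ is a simplex and $\eta_{\mathrm H}=\infty\ge|S|$. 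This covers the case $d=0$ entirely, which is why $|V_i|\ge1$ suffices there.

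Otherwise $G_S$ has an edge, and I would apply Theorem~\ref{thm:main} with $k=r+1\ge2$. Theorem~\ref{thm:main} only uses the local independence hypothesis $\alpha\le k-1=r$ and bounds $\mu$ in terms of $\Delta(G_S)+1$. Since $\Delta(G_S)\le d$, this gives
\[
\mu(G_S)\le \frac{2r}{r+1}(\Delta(G_S)+1)\le \frac{2r}{r+1}(d+1)\le \Big\lceil\frac{2r}{r+1}(d+1)\Big\rceil .
\]
Then \eqref{eq:homological-spectral-bound} yields
\[
\eta_{\mathrm H}(\mathcal I(G_S))\ge \frac{n_S}{\mu(G_S)}\ge \frac{|S|\lceil\frac{2r}{r+1}(d+1)\rceil}{\lceil\frac{2r}{r+1}(d+1)\rceil}=|S|,
\]
which is \eqref{eq:topological-hall}. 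Meshulam's criterion then gives the independent transversal.

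For the $K_{1,k}$-free consequence, I would note that induced $K_{1,k}$-freeness means $\alpha(G[N_G(v)])\le k-1$. Setting $r=k-1$ gives $\frac{2r}{r+1}=2-\frac2k$, which is the stated threshold. The case $k=1$ does not arise, since $k\ge2$ throughout.

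The only delicate points are bookkeeping. First, the hereditary passage of both hypotheses to induced subgraphs $G_S$ must be checked, and this is where the monotonicity $N_{G_S}(v)\subseteq N_G(v)$ is used. Second, \eqref{eq:homological-spectral-bound} requires at least one edge, which is why the edgeless subcase is handled separately. I expect no real obstacle beyond these checks, since the spectral content is entirely supplied by Theorem~\ref{thm:main}.
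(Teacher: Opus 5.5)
Your proposal is correct and follows the paper's proof essentially step for step. Like the paper, you verify the topological Hall criterion for each $G[\bigcup_{i\in S}V_i]$: the edgeless case is handled separately, and otherwise you combine Theorem~\ref{thm:main} (with $k=r+1$) with the bound $\eta_{\mathrm H}\ge |V|/\mu$ and the ceiling threshold. The only cosmetic difference is the case $d=0$. You dispatch it through the edgeless subcase of the Hall criterion, which is valid since each such subgraph is nonempty when $|V_i|\ge 1$, whereas the paper just notes directly that any choice of one vertex per part is an independent transversal.
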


\begin{proof}
Put $h:= \big\lceil\frac{2r}{r+1}(d+1)\big\rceil$.
Fix a nonempty set $S\subseteq[m]$, and let
$F=G\left[\cup_{i\in S} V_i\right]$.
The local independence condition is inherited by induced subgraphs, and
$\Delta(F)\leq d$. Moreover,
$|V(F)| = \sum_{i\in S} |V_i|\geq |S|h$.
If $F$ is edgeless, then $\mathcal{I}(F)$ is a simplex, so
\eqref{eq:topological-hall} holds. Otherwise, Theorem~\ref{thm:main} and the
definition of $h$ give
\[
\mu(F)\leq \frac{2r}{r+1}\big(\Delta (F) + 1\big)
\leq \frac{2r}{r+1}(d+1) \leq h.
\]
Using \eqref{eq:homological-spectral-bound}, we obtain
\[
\eta_{\mathrm{H}}(\mathcal{I}(F))
\geq \frac{|V(F)|}{\mu(F)}\geq \frac{|S|h}{h} = |S|.
\]
Thus \eqref{eq:topological-hall} holds for every $S\subseteq[m]$, and the
topological Hall criterion yields an independent transversal.  Taking
$r=k-1$ proves the final assertion.
If $d=0$, then $G$ is edgeless, so every choice of one vertex from each
nonempty part is an independent transversal.
\end{proof}

For fixed $k\geq 3$, the leading coefficient in
Corollary~\ref{cor:independent-transversal} is $2-2/k$, improving the
coefficient $2-1/(k-1)$ obtained from the direct connectivity argument in
\cite{AharoniAlonBerger}; consequently, the new threshold is smaller for all
sufficiently large $d$. The additive shift by one is unavoidable in the
corresponding spectral statement, as shown by the clique extensions in
Section~\ref{sec:sharpness}.

\section{Clique extensions and sharpness}
\label{sec:sharpness}

For a graph $G$ and a positive integer $t$, let $G[K_t]$ denote the graph
obtained by replacing every vertex $v$ of $F$ with a clique $V_v$ of order
$t$, and replacing every edge $uv$ of $F$ with all edges between $V_u$ and
$V_v$.

\begin{proposition}\label{prop:extension}
Let $k\geq 2$, and suppose that $\alpha (G[N_G(v)])\leq k-1$ for every $v\in V(G)$.
For every positive integer $t$, the graph $G[K_t]$ has the following properties.
\begin{enumerate}
\item[$(1)$] Every open neighborhood in $G[K_t]$ has independence number at most $k-1$.
\item[$(2)$] $\Delta(G[K_t]) + 1 = t(\Delta(G)+1)$.
\item[$(3)$] If $L(G)\bm{x} = \theta\bm{x}$, then the vector that is constant with value
$x_v$ on each clique $V_v$ is an eigenvector of $L(G[K_t])$ with eigenvalue $t\theta$.
\end{enumerate}
\end{proposition}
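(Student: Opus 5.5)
We verify the three items directly from the definition of $G[K_t]$. Write $\widehat G:=G[K_t]$. For a vertex $a\in V_v$, its open neighborhood is
\[
N_{\widehat G}(a)=(V_v\setminus\{a\})\cup\bigcup_{u\in N_G(v)}V_u .
\]
This description drives everything; the definition says ``vertex $v$ of $F$'' but means $G$.

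\textbf{Step 1 (item (1)).} Let $I$ be an independent set in $\widehat G[N_{\widehat G}(a)]$. Since each $V_u$ is a clique, $I$ meets each $V_u$ in at most one vertex. Two vertices of $I$ lying in distinct blocks $V_u$, $V_w$ must be nonadjacent, so $uw\notin E(G)$.

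Now split into two cases.
\begin{itemize}
\item If $I$ meets $V_v\setminus\{a\}$, then every other vertex of $I$ lies in some $V_u$ with $u\in N_G(v)$. Such a vertex is adjacent to all of $V_v$, a contradiction. Hence $|I|=1\le k-1$.
\item Otherwise, $I$ projects injectively onto an independent set of $G[N_G(v)]$. Therefore $|I|\le\alpha(G[N_G(v)])\le k-1$.
\end{itemize}
The first case is the only place needing care, and it is the main point to get right. It works because $v$ is adjacent to all of $N_G(v)$ in $G$.

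\textbf{Step 2 (item (2)).} From the displayed neighborhood, $d_{\widehat G}(a)=(t-1)+t\,d_G(v)$. Hence
\[
d_{\widehat G}(a)+1=t\,(d_G(v)+1).
\]
Maximizing over $v$ gives $\Delta(\widehat G)+1=t(\Delta(G)+1)$.

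\textbf{Step 3 (item (3)).} Let $\widehat{\bm x}$ equal $x_v$ on $V_v$. For $a\in V_v$,
\[
(L(\widehat G)\widehat{\bm x})_a=\big((t-1)+t\,d_G(v)\big)x_v-(t-1)x_v-t\sum_{u\in N_G(v)}x_u .
\]
The first term is the degree contribution. The second comes from the $t-1$ clique-mates. The third comes from the $t$ vertices in each neighboring block. The $(t-1)x_v$ terms cancel, leaving
\[
t\Big(d_G(v)x_v-\sum_{u\in N_G(v)}x_u\Big)=t\,(L(G)\bm x)_v=t\theta x_v=t\theta\,\widehat x_a .
\]
So $\widehat{\bm x}$ is an eigenvector of $L(\widehat G)$ with eigenvalue $t\theta$. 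It is nonzero whenever $\bm x$ is.

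This is all routine. The only subtlety is in item (1): handling independent sets that use clique-mates of $a$ itself.
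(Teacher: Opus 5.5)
Your proposal is correct and follows essentially the same route as the paper. For (1) you split on whether the independent set meets $V_v\setminus\{a\}$, for (2) you use the degree count $(t-1)+t\,d_G(v)$, and for (3) you do the same entrywise computation with the $(t-1)x_v$ terms cancelling. You also add the minor remarks that $\widehat{\bm x}\neq 0$ and that ``$F$'' in the definition should read $G$.
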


\begin{proof}
Fix $w\in V_v$. A vertex of $V_v\setminus\{w\}$ is adjacent to every other
vertex in $N_{G[K_t]}(w)$. Thus an independent set in this neighborhood that
meets $V_v\setminus\{w\}$ has order one.  An independent set disjoint from
$V_v$ contains at most one vertex from each $V_u$ with $u\in N_G(v)$, and the
corresponding vertices $u$ form an independent set in $G[N_G(v)]$. This proves (i).

Every vertex in $V_v$ has degree $(t-1) + t d_G(v) = t (d_G(v) + 1) - 1$,
which proves (ii).

For (iii), let $\widetilde{\bm{x}}$ have value $x_v$ on $V_v$. If $w\in V_v$, then
\begin{align*}
(L(G[K_t])\widetilde{\bm{x}})_w
& = (t d_G(v) + t - 1) x_v - (t-1) x_v - t\sum_{u\in N_G(v)} x_u \\
& = t\bigg(d_G(v) x_v - \sum_{u\in N_G(v)} x_u\bigg) 
= t(L(G) \bm{x})_v = t\theta x_v,
\end{align*}
This finish the proof of (iii).
\end{proof}

Proposition~\ref{prop:extension} also explains why the additive term in
Theorem~\ref{thm:main} is naturally attached to $\Delta(G)+1$. Indeed, if the
asymptotic estimate $\mu(G)\leq\big(\frac{2(k-1)}{k} + o(1)\big)\Delta(G)$
holds for all graphs with local independence number at most $k-1$, then applying
it to $G[K_t]$, using Proposition~\ref{prop:extension}, and letting
$t\to\infty$ gives
\[
\mu(G)\leq \frac{2(k-1)}{k} (\Delta(G) + 1).
\]
% Thus the asymptotic question of Aharoni, Alon, and Berger is equivalent to the
% exact inequality in Theorem~\ref{thm:main}.

We finish by verifying sharpness using the clique extensions employed in
\cite{AharoniAlonBerger}. Let $G$ be any $(k-1)$-regular bipartite graph
with bipartition $(X,Y)$, and let $H=G[K_t]$. By
Proposition~\ref{prop:extension}, every open neighborhood of $H$ has
independence number at most $k-1$, and $\Delta(H)=kt-1$.
Let $\bm{x}$ be $1$ on $X$ and $-1$ on $Y$. Since $G$ is $(k-1)$-regular and bipartite, $L(G)\bm{x} = 2(k-1)\bm{x}$.
Proposition~\ref{prop:extension} (iii) therefore gives
$\mu(H)\geq 2(k-1)t = \frac{2(k-1)}{k}\big(\Delta(H) + 1\big)$.
The reverse inequality follows from Theorem~\ref{thm:main}, so equality holds.
Taking, for example, $G=K_{k-1,k-1}$ produces an equality example for every $k\geq 2$, $t\geq 1$, and
$\frac{\mu(G[K_t])}{\Delta(G[K_t])}\to\frac{2(k-1)}{k}$
as $t\to\infty$.

\section*{Declaration on the use of AI}

The authors used generative AI tools to assist in discussing proof strategies, checking proofs, and
improving the exposition. The authors take full responsibility for the mathematical arguments, results,
and conclusions, all of which were carefully reviewed and verified by them.


\begin{thebibliography}{99}

\bibitem{AharoniAlonBerger}
R. Aharoni, N. Alon, E. Berger,
Eigenvalues of $K_{1,k}$-free graphs and the connectivity of their independence complexes,
\emph{J. Graph Theory} \textbf{83} (2016), 384--391.

\bibitem{AharoniBergerMeshulam}
R. Aharoni, E. Berger, R. Meshulam,
Eigenvalues and homology of flag complexes and vector representations of graphs,
\emph{Geom. Funct. Anal.} \textbf{15}\,(3) (2005), 555--566.

\bibitem{AndersonMorley}
W. N. Anderson, T. D. Morley,
Eigenvalues of the Laplacian of a graph,
\emph{Linear Multilinear Algebra} \textbf{18} (1985), 141--145.

\bibitem{Bansal-Harris2023} N. Bansal, D. G. Harris,
Some remarks on hypergraph matching and the F\"uredi--Kahn--Seymour conjecture,
\emph{Random Structures \& Algorithms} \textbf{62} (2023), 52--67.

\bibitem{BrouwerHaemers}
A. E. Brouwer and W. H. Haemers, \emph{Spectra of Graphs},
Universitext, Springer, New York, 2012.

\bibitem{Caro}
Y. Caro, \emph{New Results on the Independence Number},
Technical Report, Tel Aviv University, 1979.

\bibitem{CioabaElzingaGregory}
S. M. Cioab\u{a}, R. J. Elzinga, D. A. Gregory,
Some observations on the smallest adjacency eigenvalue of a graph,
\emph{Discuss. Math. Graph Theory} \textbf{40} (2020), 467--493.

\bibitem{KoolenCaoYang2011}
J. H. Koolen, M. Cao, Q.Q. Yang, Recent progress on graphs with fixed smallest
eigenvalue: A Survey, \emph{Graphs Combin.} \textbf{37} (2021), 1139--1178.

\bibitem{MeshulamClique}
R. Meshulam,
The clique complex and hypergraph matching,
\emph{Combinatorica} \textbf{21}\,(1) (2001), 89--94.

\bibitem{Turan}
P. Tur\'{a}n,
Eine Extremalaufgabe aus der Graphentheorie,
\emph{Mat. Fiz. Lapok} \textbf{48} (1941), 436--452.

\bibitem{Wei}
V. K. Wei,
\emph{A Lower Bound on the Stability Number of a Simple Graph},
Bell Laboratories Technical Memorandum 81-11217-9, Murray Hill, NJ, 1981.

\end{thebibliography}
\end{document}